\documentclass[12pt,oneside,a4paper]{article}

\usepackage[english]{babel}
\usepackage{amsmath,amssymb,amsfonts}
\usepackage[gen,right]{eurosym}
\usepackage{textcomp,longtable,epsfig,url}
\usepackage{bbm}
\usepackage{graphicx}
\usepackage{subfigure}
\usepackage{hyperref}
\usepackage[intoc]{nomencl}
\usepackage[printonlyused]{acronym}
\usepackage{verbatim}
\usepackage{tabularx}
\usepackage{url}
\usepackage{color}
\usepackage{amssymb,amsmath}
\usepackage{setspace}
\usepackage{scrhack}
\usepackage{floatrow}
\usepackage{listings}
\usepackage{amsthm}
\usepackage{paralist}
\usepackage{xr}
\usepackage{graphicx}
\usepackage{listings}
\usepackage{booktabs}
\usepackage{flafter}
\usepackage{varwidth}
\usepackage[round]{natbib}
\usepackage[a4paper]{geometry}
\usepackage{enumitem}
\usepackage{titling}

\makeindex

\newtheorem{definition}{Definition}[section]

\newtheorem{theorem}[definition]{Theorem}

\newcommand{\beqo}{\begin{eqnarray*}}
	\newcommand{\eeqo}{\end{eqnarray*}\noindent}
\newcommand{\beq}{\begin{eqnarray}}
	\newcommand{\eeq}{\end{eqnarray}\noindent}

\newcommand{\VaR}{\mathrm{VaR}}

\title{Weighted universal Value-at-Risk Superadditivity for discrete distributions}

\author{ Alfred M\"uller\\ Department Mathematik\\ University of Siegen, Germany\\ \texttt{mueller@mathematik.uni-siegen.de}}

\date{\today.\\[.3cm]
	}

\begin{document}
\begin{titlingpage}
		\maketitle
\begin{abstract}
The concept of weighted universal Value-at-Risk superadditivity (WUVS) was recently introduced by  \cite{chen2026universal} as a generalization of the question whether for some 
 infinite mean distributions convex combinations of i.i.d. random variables can stochastically dominate the parent distribution. In this short note we prove that the property WUVS can basically never hold for discrete distributions except for the case of comonotonicity. This implies as a corollary that for discrete distributions with infinite mean it can also never hold that convex combinations of i.i.d. random variables can stochastically dominate the parent distribution. This settles an open problem mentioned in \cite{muller2025}.

~\\			
			
{\raggedleft {\bf Keywords:} risk sharing, infinite mean models, Value-at-Risk superadditivity}   \\[0.2cm]
					\end{abstract}
	\end{titlingpage}

	\pagestyle{plain}
	
	\onehalfspacing
	\allowdisplaybreaks

\section{Introduction}

In a paper by \cite{chen-etal2024} it has been shown that Pareto distributed random variables with infinite mean fulfill the property that $X_1$ is stochastically dominated by any convex combination of i.i.d. copies of it.  Motivated by this result several other studies have followed that consider the question, how general this result holds. We will denote as in \cite{muller2025} the class of distributions that fulfill this property as $\mathcal{D}^-$. Notice that this property can be interpreted as a negative diversification effect when considering a portfolio of financial risks. 

We collect here a few of the recent results on this topic. 
\cite{vincent2025}  considers distributions that he calls subscalable. This property holds for a random variable $X$ with cdf $F$, if $x \mapsto x (1-F(x))$ is increasing on $(0,\infty)$.
\cite{muller2025} shows that the property holds for distributions of random variables that are convex transformations of Cauchy distributions. Further related results have been shown in \cite{arab2025ecp}, \cite{chen-shneer} and \cite{zeng2026}, among others. These distributions typically are continuous. So far there does not exist an example with this property of a discrete distribution assuming only finite values. \cite{muller2025} mentions the case of two-point distributions with values in $\{0, \infty\}$ and \cite{arab2025ecp} give an example with values in $\{1,2, \ldots, \infty\}$ in their Example 2.7, where also explicitly $P(X = \infty) > 0$. \cite{arab2025ecp} also mention in their Proposition 2.8 that their sufficient condition for being in $\mathcal{D}^-$ can hold for a discrete distribution only if $P(X = \infty) > 0$, whereas they mention in their introduction that it is reasonable to expect that the property may hold even for some discrete
models. \cite{vincent2025} shows that random variables with an infinite mean discrete Pareto distribution are stochastically dominated by equally weighted convex combinations of i.i.d. copies, but he mentions that this may fail for arbitrary convex combinations. 

Thus in all these recent papers there is no example of an infinite mean distribution in $\mathcal{D}^-$ that is discrete with values in a countable subset of the real line. It was mentioned explicitly as an open problem in \cite{muller2025},
whether there is a discrete distribution in $\mathcal{D}^-$ that assumes only finite values.

In a recent paper by \cite{chen2026universal} the concept of \emph{weighted universal Value-at-Risk superadditivity (WUVS)}  was introduced. This is a generalization of the property of a distribution to be in $\mathcal{D}^-$ to random vectors with components that are not necessarily i.i.d.. It  compares Value-at-Risks of convex combinations to convex combinations of Value-at-Risks. In this paper we will show that the property WUVS can basically never hold for classical discrete distributions on a countable set of values except for the case of comonotonicity. This will imply as a corollary that a discrete non-degenerate distribution can not be in $\mathcal{D}^-$, thus solving the open problem posed in \cite{muller2025}.  

{\bf Notation:} Throughout the paper we will assume the existence of a probability space $(\Omega, \mathcal{A}, P)$, on which we can define random variables $X:\Omega \to \mathbb{R}$ with arbitrary distributions. In this paper we will often consider discrete random variables that fulfill the following condition of discreteness. 

\begin{definition}  \label{def-discrete}
We say that a random variable is \textbf{discrete}, if there is 
are a countable index set $I \subset \mathbb{Z}$  and a set of numbers $\{x_i: i \in I\} \subseteq \mathbb{R}$ with $x_i < x_j$ for $i < j$, such that $P(X = x_i) > 0$ for all $i \in I$ and $\sum_{i \in I} P(X = x_i) = 1$. 
\end{definition}

We allow for arbitrary real values of $X$ even though in many applications we may have the special case of discrete random variables with non-negative integer values $x_i := i, \ i \in \mathbb{N}_0$. 
We will denote by 
$
F_X(t) := P(X\le t), \ t \in \mathbb{R},
$
the corresponding cumulative distribution function of $X$, and we will also call $F_X$ just the distribution of $X$. For any random variable $X$ with cumulative distribution function $F$ we define the Value-at-Risk of $X$ at level $p \in (0,1)$  in terms of the  generalized inverse of $F$ as follows:
$$
 \VaR_{p}(X) =F^{-1}(p) := \inf\{x \in \mathbb{R}: F(x) \ge p\}, \ 0 < p < 1.
$$
We write $X =_{st} Y$ if $F_X = F_Y$ and we define the usual stochastic order by
$$
X \le_{st}  Y \ \Leftrightarrow \ F_X(t) \ge F_Y(t) \mbox{ for all } t \in \mathbb{R}  \ \Leftrightarrow \ F_X^{-1}(p) \le F_Y^{-1}(p) \mbox{ for all } p \in (0,1). 
$$
This usual stochastic order $\le_{st}$ can also be found under the name \textit{first order stochastic dominance} in an economic context and it is generally agreed that any rational decision will prefer a risk $X$ to a risk $Y$, if $X \le_{st}  Y$ holds. It is the strongest of the well known stochastic dominance rules used for decisions under risk and in contrast to weaker notions like the second order stochastic dominance it is well defined for arbitary distributions, including infinite mean models. For properties of this and related stochastic orders we refer to the books \cite{muesto2002} and \cite{ShaSha:Springe2007}.

\newpage

\section{Main results}

\cite{chen2026universal}  introduce the following concept of 
 \emph{weighted universal Value-at-Risk superadditivity (WUVS)} for a random vector $(X_1, \dots, X_n)$ if it holds
\begin{equation} \label{wuvs-def}
   \VaR_{p}\left( \sum_{i=1}^n \theta_iX_{i}\right) \ge  \sum_{i=1}^n\VaR_{p}(\theta_iX_i) \mbox{~for all~} p\in(0,1) \mbox{~and~} (\theta_1,\dots,\theta_n) \in\Delta_n,
\end{equation} 
where $\Delta_n=\{(\theta_1,\dots,\theta_n)\in [0,1]^n:  \sum_{i=1}^n \theta_i=1\}$.  It is shown in Example 2 of  \cite{chen2026universal} that for i.i.d. random variables with the discrete distributions from the St. Petersburg lottery with $P(X = 2^k) = 2^{-k}, \ k \in \mathbb{N}$, the property WUVS does not hold, whereas in that case the weaker property UVS holds, i.e. the condition in  \eqref{wuvs-def}  holds under the additional assumption $\theta_1 = \ldots = \theta_n$. As our main result we show here the following very general negative result for discrete distributions, allowing for almost arbitrary discrete marginal distributions and dependence structures.

\begin{theorem} \label{theorem:wuvs}
Assume that the random variables $X,Y$ are discrete as defined in Definition \ref{def-discrete}.  Assume that there are indices $i,j$ with  $P(X=x_{i+1},Y=y_j) > 0$ and $P(X=x_i,Y=y_{j+1}) > 0$, then the random vector $(X,Y)$ does not have the property WUVS. 
\end{theorem}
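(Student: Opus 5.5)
The plan is to exploit the two "crossing" atoms by choosing the weight so that they collapse onto a single point of the convex combination. Put $d_X := x_{i+1}-x_i>0$ and $d_Y := y_{j+1}-y_j>0$, and take
$$
\theta := \frac{d_Y}{d_X+d_Y}\in(0,1),
$$
so that $\theta d_X=(1-\theta)d_Y$. Then
$$
c:=\theta x_{i+1}+(1-\theta)y_j=\theta x_i+(1-\theta)y_{j+1},
$$
and $Z:=\theta X+(1-\theta)Y$ takes the value $c$ on both events $A:=\{X=x_{i+1},Y=y_j\}$ and $B:=\{X=x_i,Y=y_{j+1}\}$, both of which have positive probability. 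I will test \eqref{wuvs-def} with $n=2$ and the weights $(\theta,1-\theta)$, and look for a level $p$ at which it fails.

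First I reformulate the right-hand side. Since $\theta,1-\theta>0$ we have $\VaR_p(\theta X)=\theta F_X^{-1}(p)$, and similarly for $Y$. So the right-hand side equals $g(p):=\theta F_X^{-1}(p)+(1-\theta)F_Y^{-1}(p)$. This is the quantile function of the comonotonic sum $S^c=\theta F_X^{-1}(U)+(1-\theta)F_Y^{-1}(U)$ with $U$ uniform. Hence WUVS for these weights is equivalent to $S^c\le_{st} Z$, that is, $P(Z\le t)\le P(S^c\le t)$ for all $t$. A violation is therefore obtained by finding one threshold $t$ with $P(Z\le t)>P(S^c\le t)$. The natural candidates are $t=c$ and $t=c-\varepsilon$, or equivalently levels $p$ near $a:=F_X(x_i)$ and $b:=F_Y(y_j)$.

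The core step is a counting argument at $c$. For the comonotone sum, $X$-values $\ge x_{i+1}$ are paired only with $Y$-values $\ge y_j$ on the set $p>a$, and $Y$-values $\ge y_{j+1}$ only with $X$-values $\ge x_i$ on the set $p>b$. It follows that $g(p)>c$ whenever $p>\max(a,b)$, and that $g(p)\ge c$ whenever $p>\min(a,b)$. Thus $S^c$ has no mass strictly below $c$ beyond level $\min(a,b)$, and no mass at or below $c$ beyond level $\max(a,b)$.

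For $Z$ I will bound $P(Z<c)$ and $P(Z\le c)$ from below. The set $\{X\le x_i,Y\le y_j\}$ lies in $\{Z\le c\}$ and is disjoint from $A\cup B$, which also lies in $\{Z\le c\}$. I expect the main obstacle to be converting these two bounds into a strict violation for a general dependence structure, because $Z$ may place mass at or below $c$ through other pairs $(x_k,y_l)$ that the comonotone coupling also uses. To handle this, I would split according to whether $a\le b$ or $a>b$, and compare the Fr\'echet-type bounds $P(X\le x_i,Y\le y_j)\ge a+b-1$, together with $P(X\le x_i, Y\le y_{j+1})\le \min(a,F_Y(y_{j+1}))$. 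The aim is to show $P(Z\le c)>\max(a,b)$, or alternatively $P(Z<c)>\min(a,b)$, which yields a $p$ with $\VaR_p(Z)<g(p)$.

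If this direct comparison is not sharp enough, the fallback is to perturb $\theta$ slightly. Replacing $\theta$ by $\theta\pm\varepsilon$ separates the two atoms $A$ and $B$ into $c\pm\delta$ while the comonotone quantile $g$ moves only continuously. Choosing the sign of $\varepsilon$ to match whichever of $a,b$ is larger should push one of the events $A$ or $B$ strictly below the corresponding comonotone quantile, and this gives the violating level $p$.
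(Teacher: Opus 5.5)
Your comonotone reformulation and the bound $g(p)>c$ for $p>\max(a,b)$ are correct. However, there is a genuine gap: at the collapse weight $\theta=d_Y/(d_X+d_Y)$, and at any small perturbation of it, neither $P(Z\le c)>\max(a,b)$ nor $P(Z<c)>\min(a,b)$ holds in general.

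The obstruction is the one you suspected. On $\{X\le x_i\}$ the coordinate $Y$ is unrestricted, so for a fixed interior weight most of that event can land far above $c$. For example, let $(X,Y)$ take the values $(0,M)$ and $(M,0)$ with probability $q$ each, $(1,0)$ and $(0,1)$ with probability $\delta$ each, and $(M,M)$ with the remaining probability, where $0<\delta<q$, $2(q+\delta)\le 1$ and $M>2$.
\begin{itemize}
\item The hypothesis holds with $i=j=0$, giving $\theta=1/2$, $c=1/2$ and $a=b=q+\delta$. Yet $P(Z\le c)=2\delta<q+\delta$ and $P(Z<c)=0$.
\item Your fallback fails too. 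For every $\theta\in(1/M,1-1/M)$ and every $p\le q+2\delta$ one has $\VaR_p(\theta X+(1-\theta)Y)\ge g(p)$. Indeed $g(p)=F_X^{-1}(p)\in\{0,1\}$ there and $Z>0$. Whenever $g(p)=1$ we have $p>q+\delta>2\delta=P(Z<1)$, because $\theta M>1$ and $(1-\theta)M>1$. So no violation appears near the levels $a,b$.
\item Your second counting claim is also false as stated. For $a<b$ and $p\in(a,b]$ you only know $F_Y^{-1}(p)\le y_j$, and if $F_Y(y_{j-1})>a$ then $g(p)<c$ is possible.
\end{itemize}

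The missing idea is to use an \emph{extreme} weight, together with the fact that $\varepsilon Y\to 0$ in probability. Assume $a\ge b$; otherwise exchange $X$ and $Y$, which uses $B$ instead of $A$. Put $Z_\varepsilon=(1-\varepsilon)X+\varepsilon Y$. For $p>a$ you have $F_X^{-1}(p)\ge x_{i+1}$ and $F_Y^{-1}(p)\ge y_{j+1}$, hence
\[
(1-\varepsilon)F_X^{-1}(p)+\varepsilon F_Y^{-1}(p)\ge s_\varepsilon:=(1-\varepsilon)x_{i+1}+\varepsilon y_{j+1}.
\]
On the other hand,
\[
P(Z_\varepsilon<s_\varepsilon)\ \ge\ P\bigl(X\le x_i,\ \varepsilon(Y-y_{j+1})<(1-\varepsilon)d_X\bigr)+P(X=x_{i+1},\,Y\le y_j)\ \longrightarrow\ a+P(X=x_{i+1},\,Y\le y_j)
\]
as $\varepsilon\downarrow 0$, and this limit exceeds $a$ because $P(A)>0$. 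Take any $p$ strictly between $a$ and this limit. For small $\varepsilon$ this gives $\VaR_p(Z_\varepsilon)<s_\varepsilon\le\VaR_p((1-\varepsilon)X)+\VaR_p(\varepsilon Y)$, which violates WUVS.

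This is essentially the paper's proof. The paper first normalizes to $x_i=y_j=0$, $x_{i+1}=y_{j+1}=1$ by passing to $\max\{X,x_i\}$ and $\max\{Y,y_j\}$, using that WUVS is preserved under increasing convex transformations. Note that this argument uses only one crossing atom, selected by comparing $a$ and $b$, and never needs the two atoms to collapse onto one point.
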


\begin{proof}
It has been shown in Theorem 1 of \cite{chen2026universal} that WUVS is preserved if we replace $X,Y$ by increasing convex transformations $f(X), g(Y)$. Assuming that $(X,Y)$ has the WUVS property, the same therefore holds for $(\max\{X,x_i\},\max\{Y,y_j\} )$.  Morover, the property is not affected by linear increasing transformations and thus can assume without loss of generality that $X$ assumes only values $x_0 < x_1 < \ldots$ and $Y$ assumes only values $y_0 < y_1 < \ldots$ and that the first two values are $x_0 = y_0 = 0$ and $x_1 = y_1 = 1$. We can also assume without loss of generality that $P(X=0) \ge P(Y = 0)$ as we otherwise can exchange $X$ and $Y$. We will show now that under this assumption the vector $(X,Y)$ can not have the property WUVS. Choose some $p$ with  $P(Y = 0) \le P(X = 0)  < p <  P(X = 0) + P(X=1,Y= 0)$. Then we get for some small enough $\varepsilon >0$ that
$$
P(((1-\varepsilon)X + \varepsilon Y \le  1-\varepsilon/2) \ge P(X=0, Y \le 1/\varepsilon -1/2) + P(X=1,Y=0) > p
$$
and therefore 
$\VaR_{p}((1-\varepsilon)X + \varepsilon Y)  \le  1-\varepsilon/2$. As $P(Y = 0) \le P(X = 0)  < p$ we get $\VaR_{p}(X)\ge 1$ and  $\VaR_{p}(Y)  \ge 1$ and this implies
$$
\VaR_{p}((1-\varepsilon)X )  + \VaR_{p}(\varepsilon Y )  = (1-\varepsilon) \VaR_{p}(X) +  \varepsilon  \VaR_{p}(Y)  \ge  1 > \VaR_{p}((1-\varepsilon)X + \varepsilon Y) .
$$
Hence the random vector $(X,Y)$ does not have the property WUVS. 
\end{proof}

Notice that it has been shown in Proposition 1 of \cite{chen2026universal} that WUVS can hold for random variables with a finite mean only if they are comonotonic. The assumption $P(X=x_{i+1},Y=y_j) > 0$ and $P(X=x_i,Y=y_{j+1}) > 0$ in Theorem \ref{theorem:wuvs} clearly contradicts the assumption of comonotonicity, and the theorem thus says that under this very weak condition it is impossible for a discrete distribution to fulfill WUVS even in case of infinite means without any further assumptions on the marginals and the dependence. 

This is a bit surprising, as there are many examples of continuous distributions with the property WUVS holding with strict inequality. Theorem \ref{theorem:wuvs} shows, however, so that no fine discretization does preserve this property. This means in particular that the set of distributions with the property WUVS is nowhere dense with respect to weak convergence. On the other hand, the property WUVS is preserved under convergence in distribution as shown in Theorem 3 of \cite{chen2026universal}.

The notion of WUVS is a generalization of the following concept $\mathcal{D}^-$ defined in \cite{muller2025} and used frequently in the recent literature. 
	
\begin{definition}
Let $\mathcal{D}^-$ denote the set of distribution functions $F_X$ with the property that
\begin{equation} \label{def-d}
X_1 \le_{st} \sum_{i=1}^n \theta_i X_i,
\end{equation}
for all $(\theta_1,\dots,\theta_n) \in\Delta_n$, where $X_1,\ldots,X_n$ are i.i.d. random variables with distribution functions $F_X$. We will also write $X \in \mathcal{D}^-$, if  $F_X \in \mathcal{D}^-$.
\end{definition}

The property  $X \in \mathcal{D}^-$ means in terms of risks that diversification of a portfolio of independent such risks has a negative effect by stochastically increasing the risk. This property may be a bit surprising at first sight, as one typically assumes that diversification is good at least for risk averse decision makers. A detailed discussion of this property in the context of risk exchange is given in \cite{chen2024risk}.   

For identically distributed $X_1, \ldots, X_n$ we get
$
 \sum_{i=1}^n\VaR_{p}(\theta_iX_i) = VaR_{p}(X_1) 
$ 
and this immediately implies that a vector $X = (X_1, \ldots, X_n)$ with i.i.d. components is WUVS, if and only $X_1 \in \mathcal{D}^-$. 

\begin{theorem} \label{th:main}
Assume that $X$ is non-degenerate and discrete as defined in Definition \ref{def-discrete}.  Then $X \not\in \mathcal{D}^-$.
\end{theorem}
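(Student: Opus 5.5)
The plan is to reduce Theorem \ref{th:main} to Theorem \ref{theorem:wuvs}, using the equivalence noted just before the statement. For i.i.d. components, $X_1 \in \mathcal{D}^-$ holds if and only if the vector $(X_1,\ldots,X_n)$ is WUVS for every $n$. I would take $n=2$ only, since it already suffices: if $X\in\mathcal{D}^-$, then for i.i.d. copies $X_1,X_2$ of $X$ we have $X_1 \le_{st} \theta X_1 + (1-\theta)X_2$ for all $\theta\in[0,1]$. In quantile form this reads
$$\VaR_p(\theta X_1+(1-\theta)X_2) \ge \VaR_p(X_1) = \VaR_p(\theta X_1)+\VaR_p((1-\theta)X_2)$$
for all $p$. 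Here positive homogeneity of $\VaR$ and the identical distribution of $X_1,X_2$ give the last equality. So $(X_1,X_2)$ has the property WUVS in the sense of \eqref{wuvs-def}.

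It then remains to check the hypothesis of Theorem \ref{theorem:wuvs} for $(X_1,X_2)$. Both components are discrete with the same support points $\{x_i: i\in I\}$. Since $X$ is non-degenerate, $I$ contains at least two indices. I need two consecutive indices $i, i+1 \in I$, and I would pick them before invoking the theorem, because the indexing in Definition \ref{def-discrete} uses an arbitrary countable $I\subset\mathbb{Z}$ and a general discrete distribution may have support points that accumulate. If the index set has gaps, I would relabel it increasingly. Alternatively, I would note that the theorem's proof only uses two support points $a<b$ of $X$ and two support points $c<d$ of $Y$ with $P(X=b,Y=c)>0$ and $P(X=a,Y=d)>0$, together with the truncation $\max\{X,a\}$, $\max\{Y,c\}$.

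With this setup the condition is immediate. Take $a=c=x_i$ and $b=d=x_{i+1}$. By independence,
$$P(X_1=x_{i+1},X_2=x_i)=P(X=x_{i+1})P(X=x_i)>0,$$
and symmetrically $P(X_1=x_i,X_2=x_{i+1})>0$. Theorem \ref{theorem:wuvs} then says $(X_1,X_2)$ is not WUVS, which contradicts the first step. Hence $X\notin\mathcal{D}^-$.

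The main obstacle is the indexing subtlety: ensuring that "consecutive" support points exist in the sense needed by Theorem \ref{theorem:wuvs}. Any two support points $a<b$ with no atom strictly between them would work directly. If atoms accumulate everywhere, I would rely on the observation that the proof of Theorem \ref{theorem:wuvs} after truncation only needs the next atoms above $x_i$ and $y_j$ to be well separated from them. One can always choose $x_i$ to be an atom whose successor exists, for example the smallest atom if it exists, or after applying an increasing convex transformation that collapses the lower part. If this turns out to be delicate, I would argue directly as in that proof. After the affine normalization, choose $p$ just above $P(X\le a)$ and weights $(1-\varepsilon,\varepsilon)$, and show the strict inequality using the positive masses at $(b,a)$ and $(a,b)$.
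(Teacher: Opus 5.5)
Your proposal is correct and follows the paper's proof. Like the paper, you note that $X\in\mathcal{D}^-$ makes an i.i.d. pair WUVS, and then apply Theorem \ref{theorem:wuvs} to two consecutive atoms $x_i<x_{i+1}$, whose cross-probabilities are positive by independence. Your worry about atoms with no immediate successor is unnecessary and the fallback arguments are never needed: Definition \ref{def-discrete} indexes the atoms increasingly by a subset of $\mathbb{Z}$, so after relabeling every non-maximal atom has an immediate successor.
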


\begin{proof}
If $X,Y$ are  i.i.d. with the same distribution as $X$, then for any two points $x_i < x_{i+1}$ in the support we get $P(X = x_i, Y = x_{i+1}) =  P(X = x_{i+1}, Y = x_i) = P(X=x_{i+1})P(Y=x_{i}) > 0$, and  therefore the result is an immediate consequence of Theorem \ref{theorem:wuvs}.  
\end{proof}

\bibliographystyle{apalike}


\end{document}